\def\intZ{ \mathbb{Z} }
\def\natN{  \mathbb{N} }
\def\notmid{\centernot\mid}
\newtheorem{theorem}{Theorem}[section]
\newtheorem{lemma}{Lemma}[section]
\newtheorem{conjecture}{Conjecture}[section]
\newtheorem{corollary}{Corollary}[section]
\title{An application of the BHV theorem to a new conjecture on exponential diophantine equations  }
\author{Maohua Le }
\date{17 October 2018}
\begin{document}

\maketitle

\begin{abstract}
Let $A$, $B$ be fixed positive integers such that $\min\{A,B\} > 1$, $\gcd(A,B) = 1$ and $AB \equiv 0 \bmod 2$, and let $n$ be a positive integer with $n>1$.  In this paper, using a deep result on the existence of primitive divisors of Lucas numbers due to Y. Bilu, G. Hanrot and P. M. Voutier \cite{BHV}, we prove that if $A > 8 B^3$, then the equation $(*) \quad (A^2 n)^x + (B^2 n)^y = ((A^2 + B^2)n)^z$ has no positive integer solutions $(x,y,z)$ with $x > z > y$.  Combining the above conclusion with some existing results, we can deduce that if $A >8 B^3$ and $B \equiv 2 \bmod 4$, then (*) has only the positive integer solution $(x,y,z) = (1,1,1)$.    
\end{abstract}

\bigskip

Mathematics Subject Classification: 11D61

Keywords:  ternary purely exponential diophantine equation, BHV theorem on primitive divisors of Lucas numbers

\section{Introduction}  

Let $\intZ$, $\natN$ be the sets of all integers and positive integers, respectively.  Let $a$, $b$ be fixed positive integers such that $\min\{a,b\} > 1$ and $\gcd(a,b) =1$, and let $n$ be a positive integer with $n>1$.  Recently, P.-Z. Yuan and Q. Han \cite{YuHa} proposed the following conjecture:

\begin{conjecture}  
If $\min\{a,b\} \ge 4$, then the equation
$$ (an)^x + (bn)^y = ((a+b)n)^z, x,y,z \in \natN  \eqno{(1.1)} $$
has only the solution $(x,y,z) = (1,1,1)$.  
\end{conjecture}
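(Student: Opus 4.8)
The plan is to fix a putative solution $(x,y,z) \neq (1,1,1)$ and derive a contradiction through a complete case analysis on the relative sizes of the exponents. First I would record the elementary normalizations. Writing $c = a+b$, the triple $(1,1,1)$ is a solution because $an + bn = cn$, and $x=y=z=k$ forces $k=1$ since $a^{k} + b^{k} < (a+b)^{k}$ for $k \ge 2$. Next I would extract linear constraints among $x,y,z$ from congruences and $p$-adic valuations: reducing $(an)^{x}+(bn)^{y}=(cn)^{z}$ modulo $a$, modulo $b$, and at each prime $p \mid n$ with $p \nmid abc$ (using $c \equiv b \bmod a$ and $c \equiv a \bmod b$) pins the ordering of $x,y,z$ into a small number of cases: ``$z$ largest,'' ``$z$ smallest,'' and the two crossing cases $x>z>y$ and $y>z>x$.

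The two extreme cases I expect to dispose of by growth and factorization. Since $an,bn<cn$ one has $(cn)^{z} = (an)^{x}+(bn)^{y} < 2(cn)^{\max\{x,y\}}$, and because $cn>2$ this forces $z \le \max\{x,y\}$; hence the case $z \ge \max\{x,y\}$ collapses to $z=\max\{x,y\}$ and to a two-variable equation such as $b^{y}=(c^{x}-a^{x})\,n^{x-y}$ (when $z=x\ge y$), which $\min\{a,b\}\ge 4$ together with the congruence relations should settle. In the opposite case $z \le \min\{x,y\}$, dividing out the common power of $n$ reduces the problem to $a^{x} n^{x-z}+b^{y} n^{y-z}=c^{z}$, and a Zsigmondy/Birkhoff--Vandiver argument on primitive prime divisors rules out all but finitely many explicit exponents.

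The crossing case $x > z > y$ (and symmetrically $y>z>x$) is the hard core, and it is exactly where the BHV theorem enters. Here I would rewrite the equation as a difference of two nearly equal powers equal to the small term, $((a+b)n)^{z}-(an)^{x}=(bn)^{y}$, and build a Lucas pair whose associated Lucas number can be divisible only by primes dividing the small term $(bn)^{y}$. In the square case $a=A^{2}$, $b=B^{2}$ the factorization $c = A^{2}+B^{2} = (A+Bi)(A-Bi)$ in $\mathbb{Z}[i]$ furnishes precisely such a pair $(\alpha,\beta)=(A+Bi,\,A-Bi)$, and BHV --- that the $k$-th Lucas number has a primitive divisor for every index $k>30$, with a completely explicit finite list of exceptions for $k\le 30$ --- then forces the index, hence $z$, to be bounded; the finitely many surviving $(x,y,z)$ are eliminated by the BHV exceptional table together with direct checks.

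The main obstacle is passing from the square case to arbitrary coprime $a,b$. The entire BHV mechanism hinges on producing a genuine Lucas pair, which the identity $c=A^{2}+B^{2}$ supplies only when $a,b$ are perfect squares; for general $a,b$ there is no such quadratic factorization of $c$, so the sequence one forms need not be a Lucas sequence and BHV does not apply. A secondary difficulty, present even in the square case, is that $n$ may share prime factors with $a$, $b$, or $c$, which destroys the coprimality needed both for the congruence step and for the Lucas number to be primitive, so each such prime demands a separate $p$-adic treatment; and making BHV quantitatively effective requires a size hypothesis of the shape $A>8B^{3}$ used in the paper, for which there is no analogue when $a,b$ are unrestricted. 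Resolving the first difficulty --- finding a uniform substitute for the Gaussian-integer factorization when $a,b$ are not squares --- is exactly the gap that keeps the full conjecture, as opposed to its square specialization, out of reach, and is where I would concentrate the effort.
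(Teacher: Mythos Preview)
The statement you are attempting to prove is \emph{Conjecture~1.1}, which the paper presents as an open problem; it explicitly remarks that the conjecture ``is unlikely to be solved in the short term.'' The paper proves only the much weaker Theorem~1.1 (the square case $a=A^{2}$, $b=B^{2}$ under the size hypothesis $A>8B^{3}$, and only the subcase $x>z>y$) and Corollary~1.1. So there is no ``paper's own proof'' to compare against, and any purported proof of the full conjecture would be a new result, not a reproduction.

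Your proposal is, by your own admission in its final paragraph, not a proof: you correctly identify that the BHV mechanism requires a Lucas pair, that the factorization $A^{2}+B^{2}=(A+Bi)(A-Bi)$ supplies one only in the square case, and that ``for general $a,b$ there is no such quadratic factorization of $c$, so \dots\ BHV does not apply.'' This is precisely the obstruction; the remaining steps in your outline are not enough to overcome it. In particular, the ``extreme'' cases $z\ge\max\{x,y\}$ and $z\le\min\{x,y\}$ are not as easy as you suggest (the phrase ``should settle'' and the invocation of Zsygmondy are not arguments), and even in the square crossing case your sketch diverges from the paper: the paper does not use the Gaussian pair $(A+Bi,A-Bi)$ directly but builds a Lucas pair from a solution $(X_{1},Y_{1},Z_{1})$ of $X^{2}+DY^{2}=k^{Z}$ with $D=R(A^{2x}n^{x-z})$, combines BHV with a class-number bound $h(-4D)<\frac{4}{\pi}\sqrt{D}\log(2e\sqrt{D})$, and only then closes the argument via the inequality $A>8B^{3}$. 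Without that size hypothesis the bound $z\le 6h(-4D)$ does not yield a contradiction even in the square case, so your plan would not recover Theorem~1.1 either.
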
 

Since Conjecture 1.1 is much broader than Je\'smanowicz' conjecture concerning Pythagorean triples (see \cite{Jes}), it is unlikely to be solved in the short term.  There are only a few scattered results on Conjecture 1.1 at present (see \cite{SuTa}).  In the same paper, P.-Z. Yuan and Q. Han \cite{YuHa} deal with the solutions $(x,y,z)$ of (1.1) for the case that $(a,b) = (A^2, B^2)$, where $A$, $B$ are fixed positive integers such that $\min\{A,B\} > 1$, $\gcd(A,B)=1$ and $AB \equiv 0 \bmod 2$.  Then (1.1) can be rewritten as 
$$ (A^2 n)^x + (B^2 n)^y = ((A^2 + B^2)n)^z, x,y,z \in \natN.  \eqno{(1.2)} $$
In this respect, they proved that if $B \equiv 2 \bmod 4$, then (1.2) has no solutions $(x,y,z)$ with $y>z>x$, in particular, if $B=2$, then Conjecture 1.1 is true for $(a,b) = (A^2, B^2)$.  

In 2001, Y. Bilu, G. Hanrot and P. M. Voutier \cite{BHV} completely solved the existence of primitive divisors of Lucas numbers.  This result is usually called the BHV theorem.  In this paper, using the BHV theorem, we prove the following result:

\begin{theorem}  
If $A > 8 B^3$, then (1.2) has no solutions $(x,y,z)$ with $x>z>y$.  
\end{theorem}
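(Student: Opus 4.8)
The plan is to convert a hypothetical solution with $x>z>y$ into a Lucas number of large index possessing no primitive prime divisor, contradicting the BHV theorem.

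First I would normalise the equation. Given a solution of (1.2) with $x>z>y\ge 1$, dividing by $n^{y}$ and using $B^{2y}n^{y}=\bigl((A^2+B^2)n\bigr)^{z}-(A^2n)^{x}=n^{z}\bigl((A^2+B^2)^{z}-A^{2x}n^{x-z}\bigr)$ gives $n^{z-y}\mid B^{2y}$, hence $\operatorname{rad}(n)\mid B$ and $\gcd(A,n)=1$. Comparing $\ell$-adic valuations at the primes $\ell\mid n$ then forces $(z-y)v_\ell(n)=2y\,v_\ell(B)$, and cancelling the $n$-factors leaves the reduced equation
$$(A^2+B^2)^{z}=A^{2x}n^{x-z}+b^{2y},\qquad b:=\prod_{\ell\mid B,\ \ell\nmid n}\ell^{\,v_\ell(B)},$$
with $x-z\ge 1$, $z\ge 2$, $y\ge 1$, $b\mid B$, $\gcd(A,b)=1$, $\gcd(A^2+B^2,nb)=1$, and $A^2+B^2$ odd. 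Since $A^2+B^2$ is a sum of two coprime squares it has no prime factor $\equiv 3\pmod 4$; moreover $A>8B^{3}$ makes $A^2+B^2$ non-square (else $(c-A)(c+A)=B^{2}$ with $c+A>16B^{3}$). Hence in $\mathbb{Z}[i]$ we have $A^2+B^2=\pi\bar\pi$ with $\pi=A+Bi$ and $\gcd(\pi,\bar\pi)=1$.

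Next I would split into cases according to the parities of $x-z$ and $z$ (some combinations being excluded by congruences modulo $4$ and $8$, using $A^2+B^2\equiv 1\pmod 4$). When $x-z$ is even the reduced equation reads $\bigl(A^{x}n^{(x-z)/2}\bigr)^{2}+(b^{y})^{2}=(A^2+B^2)^{z}$, a primitive two-square representation; passing to $\mathbb{Z}[i]$ gives $A^{x}n^{(x-z)/2}+b^{y}i=\varepsilon\,\delta^{z}$ for a unit $\varepsilon$ and $\delta=P+Qi$ of norm $A^2+B^2$ with $\gcd(P,Q)=1$, $Q\ne 0$, whence one of $A^{x}n^{(x-z)/2}$, $b^{y}$ equals $\tfrac12|v_z(\delta,\bar\delta)|$ and the other equals $|Q|\cdot|u_z(\delta,\bar\delta)|$, where $u_k,v_k$ are the Lucas and companion Lucas numbers of the pair $(\delta,\bar\delta)$ (for which $\delta\bar\delta=A^2+B^2$ and $(\delta-\bar\delta)^{2}=-4Q^{2}$). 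When $z$ is even I would instead factor $(A^2+B^2)^{z}-(b^{y})^{2}$ as a difference of squares in $\mathbb{Z}$, track the induced coprime factorisation of $A^{2x}n^{x-z}$, and take the sum and difference of the two factors; this yields $B_2^{2y}-B_1^{2y}$ with $B_1B_2=b$, i.e. again a Lucas number, now $u_y(B_1^{2}+B_2^{2},B_1^{2}B_2^{2})$, equal up to a bounded factor to $2A^{x}n^{(x-z)/2}/(B_2^{2}-B_1^{2})$; the degenerate case $b=1$ is handled separately by factoring $(A^2+B^2)^{z}-1$. In every case one ends with a Lucas number, of index growing with the exponents of (1.2), all of whose prime divisors lie among those of $2Abn$ — hence, since $\operatorname{rad}(n)\mid B$, among those of $2AB$.

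Finally I would apply the BHV theorem. Each Lucas pair produced above has $|\alpha\beta|$ of the size of $A^2+B^2>64B^{6}$, which by $A>8B^{3}$ lies far beyond the finite list of exceptional pairs in \cite{BHV}; so its $k$-th Lucas number has a primitive prime divisor once $k$ exceeds a small absolute constant. But the prime divisors of the Lucas numbers obtained above all divide $(\alpha-\beta)^{2}\,u_1(\alpha,\beta)\cdots u_{k-1}(\alpha,\beta)$ — the primes dividing $A$, $b$ or $n$ being absorbed by $(\alpha-\beta)^{2}$ and by the low-index terms — so no such primitive divisor can exist unless the index $k$ is small. Combined with the lower bounds forced by the reduced equation (from $(A^2+B^2)^{z}>A^{2x}n^{x-z}\ge 2^{\,x-z}A^{2x}$ and $A^2+B^2<2A^2$ one gets $2^{\,2z-x}>A^{2(x-z)}>64^{\,x-z}$, so $z$ and the relevant index are large), this leaves only finitely many exponent triples, which are eliminated directly from the reduced equation using $A>8B^{3}$. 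The hard part will be the middle step: running the parity cases so as to obtain each time a genuinely admissible Lucas sequence and verifying that all of its prime divisors are "old" — this bookkeeping, together with the non-squareness of $A^2+B^2$, is precisely where the hypothesis $A>8B^{3}$ does its work.
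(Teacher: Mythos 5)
Your opening reduction is sound and matches the paper's first step (it is Lemma 2.6 of Sun--Tang, which the paper simply cites): one does get $\mathrm{rad}(n)\mid B$, $\gcd(A,n)=1$, and the reduced equation $A^{2x}n^{x-z}+B_2^{2y}=(A^2+B^2)^z$. After that, however, your plan has three genuine gaps. First, the case analysis does not close. Your Gaussian-integer argument needs $A^{2x}n^{x-z}$ to be a perfect square, i.e.\ $x-z$ even (or $n$ a square); your difference-of-squares variant needs $z$ even; and the remaining case, $x-z$ and $z$ both odd with $n$ non-square, is waved away by an unsubstantiated appeal to congruences mod $4$ and $8$. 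I see no such congruence obstruction: $A^2+B^2\equiv 1\pmod 4$ constrains nothing about the parities of $x-z$ and $z$. The paper avoids this entirely by setting $D=R(A^{2x}n^{x-z})$ (the ``squarefree kernel''), so that the reduced equation becomes a primitive representation $X^2+DY^2=(A^2+B^2)^z$ for an arbitrary $D>2$, and then invoking a structure theorem for such representations (Lemma 2.3, from Le's earlier work); this uniformly covers all parities at the cost of introducing the class number $h(-4D)$.

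Second, the crucial claim that the Lucas number you produce has \emph{no} primitive divisor is asserted, not proved: you say the primes dividing $A$, $b$, $n$ are ``absorbed by $(\alpha-\beta)^2$ and by the low-index terms,'' but there is no reason a prime of $b$ should divide $-4Q^2$ or an earlier term of the sequence. (The workable route -- every prime of $L_t$ divides the quantity $Y\in S(D)$, hence divides $v=-4DY_1^2$, which is exactly condition (2.2) -- is what the paper's Lemma 2.5 runs; in your $\mathbb{Z}[i]$ setting one would instead need the fact that a primitive divisor of $L_t$ is $\equiv\pm1\pmod t$, hence $\ge t-1$, which you never invoke.) Third, the quantitative endgame does not close as written: your lower bound $2^{2z-x}>A^{2(x-z)}$ only gives $z\ge 14$ or so, which does not contradict an index bound of $30$, and ``eliminated directly from the reduced equation'' is not an argument. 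The contradiction in the paper is between $z\le 6h(-4D)<\tfrac{24}{\pi}AB_1\log(2eAB_1)$ and the much sharper lower bound $z>(x-z)\tfrac{A^2}{B^2}\log(A^2n)>8AB\log(A^2n)$, obtained from $(A^2n)^{x-z}<(1+B^2/A^2)^z$ together with $\log(1+\theta)<\theta$; this is precisely where the hypothesis $A>8B^3$ is spent, not on non-squareness of $A^2+B^2$ or on eliminating residual exponent triples. As it stands, your proposal would need all three of these holes repaired before it constitutes a proof.
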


By Theorem 1.1 and some results of \cite{SuTa} and \cite{YuHa}, we can deduce the following corollary:

\begin{corollary}  
If $A > 8 B^3$ and $B \equiv 2 \bmod 4$, then (1.2) has only the solution $(x,y,z)=(1,1,1)$. 
\end{corollary}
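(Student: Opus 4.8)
The plan is to take an arbitrary solution $(x,y,z) \in \natN^3$ of (1.2), now under the two standing hypotheses $A > 8B^3$ and $B \equiv 2 \bmod 4$, and to run through all possibilities for the relative order of $x$, $y$, $z$, showing that each forces $(x,y,z) = (1,1,1)$. Throughout write $a = A^2$, $b = B^2$, $c = A^2 + B^2$, so that $c = a + b$, $\gcd(a,b) = 1$ and $\gcd(c,a) = \gcd(c,b) = 1$; note in addition that $B \equiv 2 \bmod 4$ together with $\gcd(A,B) = 1$ forces $A$, hence also $a$ and $c$, to be odd, while $v_2(b) = 2$. Since $an + bn = cn$, the triple $(1,1,1)$ is a solution of (1.2), so it suffices to derive a contradiction from any solution with $(x,y,z) \ne (1,1,1)$.

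First I would dispose of the case $z \ge \max\{x,y\}$ by the one-line size estimate $(an)^x + (bn)^y \le (an)^z + (bn)^z = n^z(a^z + b^z) < n^z(a+b)^z = (cn)^z$, which is valid for every $z \ge 2$ (here $an,bn \ge 2$ and $a^z + b^z < (a+b)^z$ for $z\ge 2$); hence $z = 1$, and then $x = y = 1$ because $z = 1 \ge \max\{x,y\}$. Next I would exclude the case $z \le \min\{x,y\}$ with $(x,y,z) \ne (1,1,1)$ by appealing to the results of \cite{SuTa} (when $n$ is even this is already largely forced by comparing $2$-adic valuations of the three terms, using $v_2(a) = v_2(c) = 0$ and $v_2(b) = 2$, which rigidly ties $x$, $y$, $z$ together in a way incompatible with $z \le \min\{x,y\} < \max\{x,y\}$; for $n$ odd one leans on \cite{SuTa} directly). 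Between them, these two cases account — besides the trivial solution — for every configuration in which at least two of $x$, $y$, $z$ coincide, since a strict sandwich $\min\{x,y\} < z < \max\{x,y\}$ can only occur with $x$, $y$, $z$ pairwise distinct.

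After these reductions only the two strict sandwich cases survive: $x > z > y$, or $y > z > x$. The hypothesis $A > 8B^3$ makes Theorem 1.1 applicable and rules out $x > z > y$; the hypothesis $B \equiv 2 \bmod 4$ makes the theorem of Yuan and Han \cite{YuHa} applicable and rules out $y > z > x$. With every case exhausted, no solution other than $(1,1,1)$ can exist, which is exactly the assertion of the corollary.

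I do not anticipate a genuine analytic obstacle here, since the statement is essentially a bookkeeping synthesis of Theorem 1.1 with already-available input. The only points that will require care are verifying that the trichotomy $z \ge \max\{x,y\}$ / $z \le \min\{x,y\}$ / $\min\{x,y\} < z < \max\{x,y\}$ is genuinely exhaustive, including every way two exponents can be equal; matching each branch to exactly one tool (the elementary inequality, the results of \cite{SuTa}, the $2$-adic comparison, \cite{YuHa}, Theorem 1.1); and checking that the hypotheses $A > 8B^3$ and $B \equiv 2 \bmod 4$ are in force precisely where each of these is invoked.
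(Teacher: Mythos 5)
Your proposal is correct and follows essentially the same route as the paper: reduce any nontrivial solution to one of the two strict sandwich cases, then kill $x>z>y$ with Theorem 1.1 (using $A>8B^3$) and $y>z>x$ with the Yuan--Han result (using $B\equiv 2\bmod 4$). The only difference is cosmetic: the paper obtains the entire reduction in one stroke from Lemma 2.6 (the Sun--Tang result, applicable since $a=A^2\ge 4$ and $b=B^2\ge 4$), whereas you re-derive part of it by hand with the size estimate and a $2$-adic sketch before falling back on the same citation.
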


This implies that, for any fixed $B$ with $B \equiv 2 \bmod 4$, then Conjecture 1.1 is true for $(a,b) = (A^2, B^2)$ except for finitely many values of $A$.

\section{Preliminaries}  

Let $\alpha$, $\beta$ be algebraic integers.  If $\alpha + \beta$ and $\alpha \beta$ are nonzero coprime integers and $\alpha/\beta$ is not a root of unity, then $(\alpha, \beta)$ is called a Lucas pair.  Further, let $u = \alpha + \beta$ and $w = \alpha \beta$.  Then we have 
$$ \alpha = \frac{1}{2} (u + \lambda \sqrt{v}), \beta = \frac{1}{2} (u - \lambda \sqrt{v}), \lambda \in \{-1,1\}, $$
where $v = u^2 - 4w$.  We call $(u,v)$ the parameters of the Lucas pair $(\alpha, \beta)$.  Two Lucas pairs $(\alpha_1, \beta_1)$ and $(\alpha_2, \beta_2)$ are equivalent if $\alpha_1/\alpha_2 = \beta_1/\beta_2 = \pm 1$.  Given a Lucas pair $(\alpha, \beta)$, one defines the corresponding sequence of Lucas numbers by 
$$ L_n(\alpha,\beta) = \frac{\alpha^n - \beta^n}{\alpha - \beta}, n = 0, 1, \dots.  \eqno{(2.1)} $$
Obviously, if $n>0$, then the Lucas numbers $L_n(\alpha, \beta)$ are nonzero integers.  For equivalent Lucas pairs $(\alpha_1, \beta_1)$ and $(\alpha_2, \beta_2)$, we have $L_n(\alpha_1,\beta_1) = \pm L_n(\alpha_2,\beta_2)$ for any $n \ge 0$.  A prime $p$ is called a primitive divisor of $L_n(\alpha,\beta)$ ($n > 1$) if 
$$ p \mid L_n(\alpha,\beta), p \notmid v L_1(\alpha,\beta)\dots L_{n-1}(\alpha,\beta).  \eqno{(2.2)} $$
A Lucas pair $(\alpha, \beta)$ such that $L_n(\alpha,\beta)$ has no primitive divisors will be called an $n$-defective Lucas pair.  Further, a positive integer $n$ is called totally non-defective if no Lucas pair is $n$-defective.  

\begin{lemma}[P. M. Voutier \cite{Vou}]  
Let $n$ satisfy $4 < n \le 30$ and $n \ne 6$.  Then, up to equivalence, all parameters of $n$-defective Lucas pairs are given as follows:
\begin{enumerate}[label=(\roman*)]
  \item $n=5$, $(u,v) = (1,5)$, $(1,-7)$, $(2,-40)$, $(1,-11)$, $(1, -15)$, $(12,-76)$, $(12, -1364)$. 
  \item $n=7$, $(u,v) = (1, -7)$, $(1, -19)$.
  \item $n=8$, $(u,v) = (2, -24)$, $(1, -7)$.
  \item $n=10$, $(u,v) = (2, -8)$, $(5,-3)$, $(5, -47)$.
  \item $n=12$, $(u,v) = (1, 5)$, $(1, -7)$, $(1,-11)$, $(2, -56)$, $(1, -15)$, $(1, -19)$.
  \item $n \in \{ 13, 18, 30\}$, $(u,v) = (1, -7)$.
 \end{enumerate}   
\end{lemma}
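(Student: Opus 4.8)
The plan is to establish the classification by turning, for each fixed $n$ in the range, the property of being $n$-defective into an explicit finite list of Diophantine equations in the parameters $(u,v)$, and then resolving all of them effectively. The starting point is the cyclotomic factorization of the Lucas number. Writing $\zeta_n$ for a primitive $n$-th root of unity and
$$ \Phi_n(\alpha,\beta) = \prod_{\gcd(k,n)=1}(\alpha - \zeta_n^k\beta), $$
one has $L_n(\alpha,\beta) = \prod_{d\mid n,\,d>1}\Phi_d(\alpha,\beta)$, and a prime is a primitive divisor of $L_n$ precisely when it divides $\Phi_n(\alpha,\beta)$ but not $n$. I would then invoke the classical theory of primitive divisors (Carmichael, Birkhoff--Vandiver, and Stewart's sharpening): any non-primitive prime divisor of $\Phi_n(\alpha,\beta)$ must equal the largest prime factor $P(n)$ of $n$ and can occur only to a tightly bounded power. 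Hence $(\alpha,\beta)$ is $n$-defective if and only if $\Phi_n(\alpha,\beta) = \pm P(n)^{\delta}$ for a small exponent $\delta$, giving a finite set of admissible target values for each $n$.

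Next I would descend to the parameters. Because $\phi(n)$ is even for $n>2$, the quantity $\Phi_n(\alpha,\beta)$ is symmetric in $\alpha,\beta$ and invariant under $(\alpha,\beta)\mapsto(-\alpha,-\beta)$, so it is an even polynomial in $u$ with integer coefficients; writing $\Phi_n(\alpha,\beta)=H_n(u^2,v)$, the polynomial $H_n$ is a binary form of degree $\phi(n)/2$ in its two arguments. (A direct computation gives, for example, $16\,\Phi_5(\alpha,\beta)=5(u^2)^2+10\,u^2 v+v^2$.) For each $n$ and each admissible value $c$, the equation $\Phi_n(\alpha,\beta)=c$ then becomes, after clearing a power-of-two denominator, a Thue equation $H_n(u^2,v)=c'$ in the integer unknowns $u^2$ and $v$ once $\phi(n)\ge 6$, and a conic (reducible to an integral-points problem on a quartic curve) when $\phi(n)=4$; the original pairs are recovered by imposing that the first coordinate be a perfect square and that $v\congruent u^2\mod 4$. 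Thus for every $n$ with $4<n\le 30$, $n\ne 6$, the $n$-defective pairs correspond exactly to the non-degenerate solutions of a short, explicit family of such equations.

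The core of the argument is the resolution of all of these equations. For each $n$ and target, I would apply the effective theory of Thue equations: explicit lower bounds for linear forms in logarithms (Baker--W\"ustholz) produce an a priori upper bound on $\max(|u|,|v|)$, which is then cut down to a tractable range by lattice-basis reduction and continued-fraction techniques, and the remaining finitely many candidates are checked directly. Here one must treat the definite case $v<0$ (with $\alpha,\beta$ complex conjugate) and the indefinite case $v>0$ separately, together with the sign choices for $c$. Among the solutions I would discard those for which $\alpha/\beta$ is a root of unity, since these are excluded from the definition of a Lucas pair, and I would identify solutions equivalent under $(\alpha,\beta)\sim\pm(\alpha,\beta)$. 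For $n\in\{5,7,8,10,12,13,18,30\}$ the surviving parameters are exactly those tabulated; for every other $n$ in the range, only degenerate solutions occur, so no $n$-defective pair exists, which is why those $n$ are absent from the list.

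The main obstacle is twofold. First, one must pin down precisely the admissible target set $\pm P(n)^{\delta}$, which requires the sharp form of the results on non-primitive prime divisors, including the exact exponent bound and the delicate exceptional behaviour at the small primes $p=2$ and $p=3$; an error here would either drop genuine cases or create spurious equations. Second, and more seriously, the explicit solution of the resulting Thue equations is a heavy and technically exacting computation: the degrees reach $\phi(30)=8$, the Baker-type bounds are astronomically large, and the real labour lies in the reduction step and in certifying rigorously that no solution inside the still-sizeable reduced range has been overlooked. It is this extensive effective computation, rather than any single conceptual step, that forms the heart of the proof.
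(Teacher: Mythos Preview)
The paper does not prove this lemma at all: it is stated with attribution to Voutier \cite{Vou} and used as a black box, so there is no ``paper's own proof'' to compare against. Your write-up is not a comparison target but rather an outline of how the cited result itself is established.

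That said, your sketch is broadly faithful to the method of the original source. The pipeline you describe---cyclotomic factorization $L_n=\prod_{d\mid n,\,d>1}\Phi_d$, the characterization of non-primitive prime divisors of $\Phi_n(\alpha,\beta)$ as divisors of $n$ with tightly bounded exponent, rewriting $\Phi_n(\alpha,\beta)=H_n(u^2,v)$ as a homogeneous form of degree $\phi(n)/2$, and then solving the resulting Thue-type equations via linear forms in logarithms plus lattice reduction---is exactly the strategy of Voutier's paper (and of the later Bilu--Hanrot--Voutier work that handles $n>30$). Your identification of the two genuine difficulties, namely pinning down the exact admissible right-hand sides and then certifying the Thue computations up through degree $\phi(30)/2=4$, is also accurate. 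One small wording issue: for $\phi(n)=4$ the form $H_n$ is quadratic in $(u^2,v)$, so the equation is a conic and the constraint that the first variable be a square is what elevates it to a genus-one problem; calling it ``reducible to an integral-points problem on a quartic curve'' is slightly imprecise but captures the right flavor. In any case, for the purposes of the present paper nothing beyond citing \cite{Vou} is required.
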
 

\begin{lemma}[Y. Bilu, G. Hanrot, P. M. Voutier \cite{BHV}]  
If $n>30$ then $n$ is totally non-defective.  
\end{lemma}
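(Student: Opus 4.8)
The plan is to prove the statement in its strong effective form: exhibit an absolute bound $N_0$ such that any $n$-defective Lucas pair forces $n \le N_0$, and then clear the finitely many residual cases $30 < n \le N_0$ by reduction to explicit Diophantine equations, using Lemma 2.1 (Voutier) to handle the overlap with small $n$. The starting point is the factorization of Lucas numbers through homogenized cyclotomic polynomials. Writing $\zeta_n = e^{2\pi i/n}$ and $\Phi_n(\alpha,\beta) = \prod_{\gcd(j,n)=1}(\alpha - \zeta_n^j\beta)$, one has $\alpha^n - \beta^n = \prod_{d \mid n} \Phi_d(\alpha,\beta)$, so that, up to the intrinsic contribution of $v$ and of the prime factors of $n$, a primitive divisor of $L_n(\alpha,\beta)$ is precisely a prime dividing $\Phi_n(\alpha,\beta)$ but not $n$. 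First I would make this dictionary exact: show that if $(\alpha,\beta)$ is $n$-defective, then every prime divisor of $\Phi_n(\alpha,\beta)$ divides $n$, and in fact $\Phi_n(\alpha,\beta)$ equals $\pm 1$, $\pm p$, or a small power of the largest prime $p \mid n$. This yields the crucial \emph{upper bound} $|\Phi_n(\alpha,\beta)| \le c_1\, p \le c_1\, n$ for an absolute constant $c_1$.

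Second, I would derive a competing \emph{lower bound} for $|\Phi_n(\alpha,\beta)|$. Since $|\Phi_n(\alpha,\beta)| = |\alpha|^{\phi(n)} \prod_{\gcd(j,n)=1} |1 - \zeta_n^j(\beta/\alpha)|$, this bound is easy and exponentially large whenever $|\alpha/\beta|$ is bounded away from $1$; the genuinely hard regime is $v < 0$, where $\alpha$ and $\beta$ are complex conjugates, $|\alpha|=|\beta|$, and $\beta/\alpha = e^{-i\theta}$ lies on the unit circle. Here the size of $|\Phi_n(\alpha,\beta)|$ is governed by how closely the argument $\theta$ can be approximated by the angles $2\pi j/n$, that is, by $\min_j|1 - e^{i(2\pi j/n - \theta)}|$. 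Controlling this smallest factor from below is the heart of the matter, and it is exactly a problem about a linear form $n\log(\alpha/\beta) - 2\pi i k$ in two logarithms of algebraic numbers.

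Third, and this is the main obstacle, I would invoke the theory of linear forms in logarithms. Applying a sharp lower bound for linear forms in two logarithms (in the style of Laurent–Mignotte–Nesterenko) to $|\,n\log\alpha - n\log\beta - k\cdot 2\pi i\,|$ gives a lower bound for $|\Phi_n(\alpha,\beta)|$ of the shape $|\alpha|^{\phi(n) - c_2\log n}$ with $c_2$ absolute. Matching this against the upper bound $|\Phi_n(\alpha,\beta)| \le c_1 n$ from the first step forces an inequality of the form $\phi(n)\log|\alpha| \le c_3(\log n)(\log|\alpha|) + c_4\log n$, and since $|\alpha|$ is bounded below away from $1$ for an $n$-defective pair, this collapses to $\phi(n) \le c_5\log n$, which fails for all sufficiently large $n$ because $\phi(n)$ grows almost linearly. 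Extracting an \emph{explicit} threshold here is delicate: one must track every constant through the logarithmic-form estimate and separately treat the real case $v>0$ and the subtle near-degenerate subcases where $|\alpha/\beta|\to 1$. This is exactly where the deep input of \cite{BHV} is unavoidable and where the effective constant $N_0$ (on the order of $3\cdot 10^4$ in their work) is produced.

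Finally, I would dispose of the range $30 < n \le N_0$ computationally. For each such fixed $n$, the condition that $\Phi_n(\alpha,\beta)$ takes one of the few admissible values pinned down in the first step becomes a single Diophantine equation in the integer parameters $(u,v)$ — a Thue or Thue–Mahler equation, or a superelliptic curve — of bounded degree, and these can be enumerated and solved. Checking that none produces a genuine $n$-defective pair with $n > 30$, and matching the boundary of this search against the explicit exceptional list of Lemma 2.1 for $n \le 30$, completes the proof that every $n > 30$ is totally non-defective. The two real difficulties are thus, first, pushing the linear-forms estimate to an explicit and computationally feasible $N_0$, and second, carrying out the resulting large but finite family of Diophantine computations without gaps.
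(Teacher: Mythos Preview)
The paper does not prove this lemma at all: it is stated as a citation of the main theorem of \cite{BHV} and used as a black box. So there is no ``paper's own proof'' to compare against; the intended answer here is simply to invoke the reference.

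Your outline is, in fact, a reasonable high-level sketch of how Bilu, Hanrot and Voutier actually proceed: the translation of $n$-defectiveness into a bound $|\Phi_n(\alpha,\beta)| \le c\,n$, a competing lower bound coming from a linear form in two logarithms (the hard case being $v<0$ with $|\alpha|=|\beta|$), the resulting inequality of the shape $\phi(n) \ll \log n$ yielding an explicit $N_0$, and then a case-by-case elimination of $30 < n \le N_0$ via Thue/Thue--Mahler equations. That is the right architecture, and you have correctly identified the two genuine difficulties (making $N_0$ explicit and small enough, and the subsequent computation). But none of this is expected or reproduced in the present paper; for the purposes of this manuscript the lemma is an external input, and the appropriate ``proof'' is a one-line attribution to \cite{BHV}.
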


Let $D$, $k$ be fixed positive integers such that $\min\{ D, k\} > 1$ and $\gcd(2D, k) = 1$, and let $h(-4D)$ denote the class number of positive definite binary quadratic primitive forms of discriminant $-4D$.  

\begin{lemma} 
If the equation 
$$ X^2 + D Y^2 = k^Z, X,Y,Z \in \intZ, \gcd(X,Y)=1, Z>0 \eqno{(2.3)} $$
has solutions $(X,Y,Z)$, then every solution $(X,Y,Z)$ of (2.3) can be expressed as 
$$Z = Z_1 t, t \in \natN, \eqno{(2.4)} $$
$$X+Y \sqrt{-D} = \lambda_1 (X_1 + \lambda_2 Y_1 \sqrt{-D})^t, \lambda_1, \lambda_2 \in \{ -1, 1\}, \eqno{(2.5)} $$
where $X_1$, $Y_1$, $Z_1$ are positive integers satisfying $X_1^2 + D Y_1^2 = k^{Z_1}$, $\gcd(X_1, Y_1) = 1$ and 
$$ h(-4D) \equiv 0 \bmod Z_1.  \eqno{(2.6)} $$
\end{lemma}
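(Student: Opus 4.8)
The plan is to translate the integer equation (2.3) into ideal arithmetic in the order $\mathcal{O}=\intZ[\sqrt{-D}]$, whose discriminant is $-4D$; by the classical correspondence between primitive positive definite binary quadratic forms of discriminant $-4D$ and proper (invertible) ideals of $\mathcal{O}$, the group of invertible ideal classes of $\mathcal{O}$ has order exactly $h(-4D)$. First I would pin down the behaviour of the primes dividing $k$. Every prime $p\mid k$ is odd and does not divide $D$ (both because $\gcd(2D,k)=1$), hence $p\notmid 4D$; thus $p$ is coprime to the conductor of $\mathcal{O}$ and unramified, and from a solution $(X,Y,Z)$ of (2.3) with $\gcd(X,Y)=1$ one checks $p\notmid Y$ (else $p\mid X$ too), so $-D$ is a nonzero square modulo $p$ and $p$ splits in $\mathcal{O}$: $p\mathcal{O}=\mathfrak{p}_p\overline{\mathfrak{p}}_p$ with $\mathfrak{p}_p\neq\overline{\mathfrak{p}}_p$, both factors invertible.

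Next I would factor the ideal $(X+Y\sqrt{-D})$ of $\mathcal{O}$. The ideals $(X+Y\sqrt{-D})$ and $(X-Y\sqrt{-D})$ are coprime: a common ideal factor divides $(2X)$, $(2Y\sqrt{-D})$ and $(k^Z)$, and since $k$ is odd it is coprime both to $(2)$ and to $(\sqrt{-D})$ (whose norm $D$ is coprime to $k$), so it divides $(X)$ and $(Y)$ and hence $\gcd(X,Y)\mathcal{O}=\mathcal{O}$. As the norm of $(X+Y\sqrt{-D})$ is $k^Z$, all its prime factors lie above divisors of $k$; combining this with the coprimality and with $(X+Y\sqrt{-D})(X-Y\sqrt{-D})=(k)^Z=\prod_{p\mid k}(\mathfrak{p}_p\overline{\mathfrak{p}}_p)^{Z\,v_p(k)}$ shows that for each $p\mid k$ exactly one of $\mathfrak{p}_p^{Z v_p(k)}$, $\overline{\mathfrak{p}}_p^{Z v_p(k)}$ divides $(X+Y\sqrt{-D})$. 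Writing $\mathfrak{q}_p$ for the chosen prime above $p$ and $\mathfrak{a}=\prod_{p\mid k}\mathfrak{q}_p^{v_p(k)}$, I obtain an invertible ideal with $\mathfrak{a}\overline{\mathfrak{a}}=k\mathcal{O}$, $N(\mathfrak{a})=k$ and $(X+Y\sqrt{-D})=\mathfrak{a}^Z$.

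Now the class number enters. Since $\mathfrak{a}^Z$ is principal, the order $Z_1$ of the class $[\mathfrak{a}]$ in the invertible ideal class group of $\mathcal{O}$ divides $Z$; writing $Z=Z_1 t$ yields $t\in\natN$, which is (2.4), while $Z_1\mid h(-4D)$ by Lagrange, which is (2.6). Choose a generator $\gamma=c+d\sqrt{-D}\in\mathcal{O}$, with $c,d\in\intZ$, of the principal ideal $\mathfrak{a}^{Z_1}$; then $c^2+Dd^2=N(\gamma)=k^{Z_1}$. I would verify $\gcd(c,d)=1$ — a common prime $q$ would force $q\mid k$ and then $q\mathcal{O}=\mathfrak{q}\overline{\mathfrak{q}}\mid\mathfrak{a}^{Z_1}$, impossible since $\mathfrak{a}$ contains only one prime above each divisor of $k$ — and that $c\neq0$, $d\neq0$ (if $d=0$ then $\mathfrak{a}^{Z_1}$ is generated by a rational integer, hence divisible by $\mathfrak{q}$ and $\overline{\mathfrak{q}}$ alike for every relevant $q$; if $c=0$ then $D\mid k^{Z_1}$, contrary to $\gcd(D,k)=1$ and $D>1$). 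Then $X_1:=|c|$, $Y_1:=|d|$ and $Z_1$ are positive integers with $X_1^2+DY_1^2=k^{Z_1}$ and $\gcd(X_1,Y_1)=1$.

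Finally, $(X+Y\sqrt{-D})=\mathfrak{a}^Z=(\gamma)^t$ gives $X+Y\sqrt{-D}=\eta\gamma^t$ for a unit $\eta$, and $\eta\in\mathcal{O}^{\times}=\{-1,1\}$ since $D>1$. Writing $\gamma=\lambda_1'(X_1+\lambda_2'Y_1\sqrt{-D})$ with $\lambda_1',\lambda_2'\in\{-1,1\}$ determined by the signs of $c,d$, one gets $X+Y\sqrt{-D}=\lambda_1(X_1+\lambda_2 Y_1\sqrt{-D})^t$ with $\lambda_1=\eta(\lambda_1')^t$ and $\lambda_2=\lambda_2'$, which is (2.5). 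I expect the real work to be bookkeeping rather than any deep point: one must treat $\intZ[\sqrt{-D}]$ as the (possibly non-maximal) order of discriminant $-4D$, so that $h(-4D)$ is genuinely the order of its invertible ideal class group; one must use $\gcd(2D,k)=1$ precisely to make $\mathfrak{a}$ invertible (primes of the conductor divide $2D$); and one must track the two sign ambiguities — the unit $\pm1$ and the conjugation $\mathfrak{a}\leftrightarrow\overline{\mathfrak{a}}$ absorbed into $\lambda_2$ — carefully enough to keep $X_1$ and $Y_1$ positive.
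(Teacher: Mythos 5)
Your argument is correct, but it is not the route the paper takes: the paper offers no proof at all, deriving the lemma as the special case $D_1=1$, $D_2<-1$ of Theorems 1 and 2 of Le's 1995 paper on $D_1x^2-D_2y^2=\lambda k^z$. What you have written is, in effect, the standard self-contained proof of that representation theorem in this special case: every prime $p\mid k$ splits in the order $\mathcal{O}=\intZ[\sqrt{-D}]$ of discriminant $-4D$, the ideals $(X+Y\sqrt{-D})$ and $(X-Y\sqrt{-D})$ are coprime, so $(X+Y\sqrt{-D})=\mathfrak{a}^Z$ for an invertible ideal $\mathfrak{a}$ of norm $k$, and taking $Z_1$ to be the order of $[\mathfrak{a}]$ in the proper ideal class group gives $Z_1\mid Z$ and $Z_1\mid h(-4D)$, after which a generator $\gamma$ of $\mathfrak{a}^{Z_1}$ is unwound up to the units $\pm1$ (here $D>1$ is used). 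The two points that genuinely need care are exactly the ones you flag: $\mathcal{O}$ may be non-maximal, but every ideal you manipulate has norm a power of $k$ and is therefore prime to the conductor (which divides $2D$), so unique factorization, invertibility, and the identification of $h(-4D)$ with the proper ideal class number of $\mathcal{O}$ all apply; and the sign bookkeeping yielding $\lambda_1,\lambda_2$ with $X_1,Y_1>0$, where your checks that $c\ne0$, $d\ne0$ and $\gcd(c,d)=1$ (using $k>1$, $\gcd(D,k)=1$, and the fact that $\mathfrak{a}$ contains only one prime above each $p\mid k$) are the right ones. The trade-off is the usual one: the citation keeps the paper short, while your argument makes Lemma 2.3 self-contained and makes visible precisely where each hypothesis ($k$ odd, $\gcd(D,k)=1$, $D>1$, $k>1$) enters.
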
 

\begin{proof}  
This lemma is a special case of Theorems 1 and 2 of \cite{Le} for $D_1 = 1$ and $D_2 < -1$.  
\end{proof}

\begin{lemma}[Theorems 12.10.1 and 12.14.3 of \cite{Hua}]  
$h(-4D) < \frac{4}{\pi} \sqrt{D} \log(2 e \sqrt{D})$.  
\end{lemma}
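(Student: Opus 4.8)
The statement is Dirichlet's class number bound, and the natural route is to combine the analytic class number formula with an elementary estimate for the associated $L$-value at $s=1$. The plan is to write $\chi$ for the Kronecker symbol $\chi(n)=\Jacobi{-4D}{n}$, a real non-principal character of period $4D$, and to invoke Dirichlet's formula for the number of classes of primitive positive definite forms of discriminant $-4D$, namely
$$ h(-4D) = \frac{w \sqrt{4D}}{2\pi}\, L(1,\chi), \qquad L(1,\chi) = \sum_{n=1}^{\infty}\frac{\chi(n)}{n}. $$
Since $D>1$, the order of discriminant $-4D$ contains only the units $\pm 1$, so $w=2$ and the formula collapses to $h(-4D) = \frac{2\sqrt{D}}{\pi}L(1,\chi)$. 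With this in hand the whole problem reduces to proving the single clean inequality $L(1,\chi) < 2\log(2e\sqrt{D})$, because one checks at once that $\frac{2\sqrt{D}}{\pi}\cdot 2\log(2e\sqrt{D}) = \frac{4}{\pi}\sqrt{D}\log(2e\sqrt{D})$.

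To bound $L(1,\chi)$ I would use partial summation. Writing $S(x)=\sum_{n\le x}\chi(n)$ and using that the sum of $\chi$ over one full period vanishes, comparison of a partial sum with its complement inside $[1,4D]$ gives the crude but sufficient bound $|S(x)| \le \tfrac12\phi(4D) < 2D$ for every $x$. I then split the series at $N=4D$:
$$ L(1,\chi) = \sum_{n\le N}\frac{\chi(n)}{n} + \sum_{n>N}\frac{\chi(n)}{n}. $$
The head is dominated termwise, $\bigl|\sum_{n\le N}\frac{\chi(n)}{n}\bigr| \le \sum_{n\le N}\frac1n < 1 + \log N = 1 + \log(4D)$, while Abel summation rewrites the tail as $\int_N^{\infty}\bigl(S(t)-S(N)\bigr)t^{-2}\,dt$, whose absolute value is at most $4D\int_N^{\infty}t^{-2}\,dt = 4D/N = 1$. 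Adding the two contributions yields $L(1,\chi) < \log(4D)+2$, and the elementary identity $\log(4D)+2 = 2\log(2e\sqrt{D})$ is exactly the bound required; strictness is automatic since $\phi(4D)<4D$ already makes the tail estimate strict.

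The substantive analytic input is Dirichlet's class number formula in the displayed form, valid for the \emph{form} class number of the possibly non-fundamental discriminant $-4D$ with $\chi$ taken completely multiplicative of period $4D$; this is precisely the content quoted from Hua's Theorems 12.10.1 and 12.14.3, so I would cite it rather than reprove it. The point to watch is the interplay of the two constants: the truncation $N=4D$ is forced by the demand that, under the trivial partial-sum bound, the tail contribute at most $1$, and it is a pleasant coincidence that this same choice makes the head at most $1+\log(4D)$, so head plus tail lands exactly on $2\log(2e\sqrt{D})$ with no slack to spare. A sharper character-sum estimate such as P\'olya--Vinogradov would improve the constant but is unnecessary here; the only genuine bookkeeping is fixing $w=2$ for $D>1$ and normalizing the Kronecker symbol correctly for the even modulus $4D$, after which the computation above closes the argument.
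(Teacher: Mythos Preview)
The paper gives no proof of this lemma at all; it is stated with a bare citation to Hua's Theorems 12.10.1 and 12.14.3 and used as a black box. Your proposal therefore does not reproduce the paper's argument but rather supplies one where the paper offers none.

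That said, your argument is correct and is essentially the standard one (and presumably close to what Hua does): Dirichlet's analytic class number formula gives $h(-4D)=\tfrac{2\sqrt{D}}{\pi}L(1,\chi)$ once $w=2$ is fixed for $-4D<-4$, and the elementary partial-summation bound with truncation at $N=4D$ and the trivial estimate $|S(x)|\le\tfrac12\phi(4D)$ yields $L(1,\chi)<2+\log(4D)=2\log(2e\sqrt{D})$, exactly matching the stated constant. The only delicate point you flag yourself---that $-4D$ need not be fundamental, so one must take the class-number formula in the form-class-number version with the imprimitive Kronecker character of modulus $4D$---is real, and you are right to defer it to Hua rather than redevelop it. Within the paper's context $D>1$ is always in force, so your normalization $w=2$ is justified; the cases $D=1,2$ are irrelevant here (and the inequality holds for them anyway by direct evaluation). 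In short: the paper cites, you prove; your proof is sound and standard.
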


For a positive integer $m$ with $m>1$, let $m = p_1^{t_1} \dots p_l^{t_l}$ denote the factorization of $m$.  Further, let $r(m) = p_1 \dots p_l$, 
$$ R(m) = p_1^{e_1} \dots p_l^{e_l}, e_i = 
\begin{cases}
2, {\rm if \ } t_i \equiv 0 \bmod 2, \\  
1, {\rm if \ } t_1 \equiv 1 \bmod 2, 
\end{cases}
i = 1, \dots, l, $$
$r(1) = R(1) = 1$ and 
$$ S(m) = \{ \pm p_1^{s_1} \dots p_l^{s_l} \mid s_i \in \intZ, s_i \ge 0, i = 1, \dots, l \}. $$
Obviously, $r(m)$ and $R(m)$ have the following properties:
\begin{enumerate}[label=(\roman*)]
  \item $R(m)>1$ if $m>1$.
  \item $m / R(m)$ is the square of a positive integer. 
  \item For any positive integer $m'$ with $m' \in S(m)$, $R(m') \le (r(m))^2$.
  \item For any coprime positive integers $m_1$ and $m_2$, $r(m_1 m_2) = r(m_1) r(m_2)$ and $R(m_1 m_2) = R(m_1) R(m_2)$.  
\end{enumerate}  

\begin{lemma}  
If $D>2$ and $(X,Y,Z)$ is a solution of (2.3) with 
$$ Y \in S(D), \eqno{(2.7)} $$
then 
$$ Z \le 6 h(-4D). \eqno{(2.8)} $$
\end{lemma}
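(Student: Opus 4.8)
The plan is to parametrize the solution via Lemma 2.3 and then use the BHV theorem (Lemmas 2.1 and 2.2), together with Voutier's explicit list, to force the exponent $t$ appearing in that parametrization to satisfy $t\le 6$.

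First I would apply Lemma 2.3. Since $(X,Y,Z)$ is a solution of (2.3), we may write $Z=Z_1t$ with $t\in\natN$ and $X+Y\sqrt{-D}=\lambda_1(X_1+\lambda_2Y_1\sqrt{-D})^t$ for suitable $\lambda_1,\lambda_2\in\{-1,1\}$, where $X_1,Y_1,Z_1$ are positive integers with $X_1^2+DY_1^2=k^{Z_1}$, $\gcd(X_1,Y_1)=1$, and, by (2.6), $Z_1\mid h(-4D)$. Then $Z_1\le h(-4D)$, so it suffices to prove $t\le 6$, since then $Z=Z_1t\le 6h(-4D)$. Assume for contradiction that $t\ge 7$ and set $\alpha=X_1+\lambda_2Y_1\sqrt{-D}$, $\beta=X_1-\lambda_2Y_1\sqrt{-D}$. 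Then $\alpha+\beta=2X_1$ and $\alpha\beta=X_1^2+DY_1^2=k^{Z_1}$ are nonzero integers; a common prime divisor of $2X_1$ and $k^{Z_1}$ would be odd (as $2\nmid k$), hence divide $X_1$, hence divide $DY_1^2$, hence (since $\gcd(D,k)=1$) divide $Y_1$, contradicting $\gcd(X_1,Y_1)=1$, so $\alpha+\beta$ and $\alpha\beta$ are coprime. A short check using $D>2$ and $\gcd(2D,k)=1$ shows $\alpha/\beta$ is not a root of unity, so $(\alpha,\beta)$ is a Lucas pair with parameters $u=2X_1$ and $v=u^2-4\alpha\beta=-4DY_1^2$.

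The decisive point is that, from $\lambda_1\alpha^t=X+Y\sqrt{-D}$ and its complex conjugate, $\alpha^t-\beta^t=\pm 2Y\sqrt{-D}$; dividing by $\alpha-\beta=2\lambda_2Y_1\sqrt{-D}$ yields $L_t(\alpha,\beta)=\pm Y/Y_1$, so in particular $Y_1\mid Y$. Now the hypothesis $Y\in S(D)$ says every prime factor of $Y$ divides $D$, hence the same holds for $L_t(\alpha,\beta)=\pm Y/Y_1$, and therefore every prime factor of $L_t(\alpha,\beta)$ divides $v=-4DY_1^2$. Thus $L_t(\alpha,\beta)$ has no primitive divisor, that is, $(\alpha,\beta)$ is a $t$-defective Lucas pair. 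By Lemma 2.2 this forces $t\le 30$, and then by Lemma 2.1 the pair $(u,v)$ must appear in the list there; since that list is empty unless $t\in\{7,8,10,12,13,18,30\}$, these are the only remaining possibilities for $t$. However $u=2X_1$ is positive and even, which rules out every listed pair with odd first coordinate, so the only survivors are $(u,v)=(2,-24)$ with $t=8$, $(2,-8)$ with $t=10$, and $(2,-56)$ with $t=12$. From $v=-4DY_1^2$ we read off $DY_1^2\in\{6,2,14\}$; the case $DY_1^2=2$ forces $D=2$, excluded by $D>2$, while the other two force $Y_1=1$, $X_1=1$, and $D\in\{6,14\}$, whence $\alpha\beta=1+D\in\{7,15\}$. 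A direct computation of the Lucas numbers for $(u,\alpha\beta)=(2,7)$ and $(2,15)$ gives $L_8=-40$ and $L_{12}=-23452$; since $Y_1=1$, this would mean $Y=\pm40\notin S(6)$ or $Y=\pm23452\notin S(14)$, contradicting $Y\in S(D)$. Hence $t\le 6$ and $Z=Z_1t\le 6h(-4D)$.

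The one part requiring genuine care is the casework in the last paragraph: eliminating each parameter family produced by Lemma 2.1 using the parity of $u$, the sign and $2$-adic shape of $v$, and the constraint $D>2$, and then confirming via the explicit values of $L_8$ and $L_{12}$ that the two near-miss discriminants $D=6$ and $D=14$ are truly incompatible with $Y\in S(D)$. The rest is a routine unwinding of Lemma 2.3, the Lucas-pair formalism, and the definition of $S(D)$.
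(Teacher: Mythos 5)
Your proof is correct and follows the paper's argument essentially step for step: the same reduction via Lemma 2.3 to a Lucas pair with parameters $(u,v)=(2X_1,-4DY_1^2)$, the same observation that $Y\in S(D)$ together with $L_t(\alpha,\beta)=\pm Y/Y_1$ forces $L_t$ to have no primitive divisor, and the same appeal to Lemmas 2.1 and 2.2. The only divergence is in handling the surviving exceptional parameter sets $(2,-24)$ and $(2,-56)$: the paper does not eliminate them but simply checks that $Z=Z_1t\le 6h(-4D)$ still holds there because $Z_1=1$, $h(-24)=2$ and $h(-56)=4$, whereas you rule them out entirely by computing $L_8=-40$ and $L_{12}=-23452$ and noting these violate $Y\in S(D)$ --- a correct and slightly stronger conclusion ($t\le 6$ in all cases) that the lemma does not actually require.
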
 

\begin{proof}
By Lemma 2.4, the solution $(X,Y,Z)$ can be expressed as (2.4) and (2.5), where $X_1$, $Y_1$, $Z_1$ are positive integers satisfying $\gcd(X_1, Y_1)=1$ and (2.6).  Let 
$$ \alpha = X_1 + Y_1 \sqrt{-D}, \beta = X_1 - Y_1 \sqrt{-D}. \eqno{(2.9)} $$
Then we have 
$$ \alpha + \beta = 2 X_1, \alpha \beta = k^{Z_1}, \eqno{(2.10)} $$
$$ \frac{\alpha}{\beta} =  \frac{1}{k^{Z_1}} ( (X_1^2 - D Y_1 ^2) + 2 X_1 Y_1 \sqrt{-D} ).  \eqno{(2.11)}  $$
Since $\gcd(X_1, Y_1) = \gcd(2D, k) = 1$, we see from (2.10) that $\alpha + \beta$ and $\alpha \beta$ are coprime postive integers.  Since $k>1$, by (2.11), $\alpha/\beta$ is not a root of unity.  Hence, by (2.9), $(\alpha, \beta)$ is a Lucas pair with parameters 
$$(u,v) = (2 X_1, -4 D Y_1^2 ).  \eqno{(2.12)} $$

Let $L_n(\alpha, \beta)$ ($n = 0, 1, \dots$) denote the corresponding Lucas numbers.  Since $X - Y \sqrt{-D} = \lambda_1 (X_1 - \lambda_2 Y_1 \sqrt{-D})^t$ by (2.5), we get from (2.1), (2.5) and (2.9) that 
$$Y = Y_1 \vert L_t(\alpha, \beta) \vert.  \eqno{(2.13)} $$
Hence, we find from (2.2), (2.7), (2.12) and (2.13) that either $t=1$ or $t>1$ and $L_t(\alpha, \beta)$ has no primitive divisors.  Therefore, by Lemma 2.2, we have $t \le 30$.  Further, since $D>2$, applying Lemma 2.1 to (2.12), we get either
$$ t \le 6  \eqno{(2.14)} $$
or
$$ ( D, k, X_1, Y_1, Z_1, t) = (6,7,1,1,1,8), (14,15,1,1,1,12). \eqno{(2.15)} $$
Since $Z_1 \le h(-4D)$ by (2.6), if $t$ satisfies (2.14), then $Z$ satisfies (2.8) by (2.4).  On the other hand, since $h(-24) = 2$ and $h(-56) = 4$, if $t$ satisfies (2.15), then $Z$ also satisfies (2.8).  Thus, the lemma is proven.  
\end{proof}

\begin{lemma}[C.-F. Sun and M. Tang \cite{SuTa}] 
Let $(x,y,z)$ be a solution of (1.1) with $(x,y,z) \ne (1,1,1)$.  If $\min\{a,b\} \ge 4$, then either 
$$ x>z>y, b = b_1 b_2, b_1^{y} = n^{z-y}, b_1, b_2 \in \natN, b_1 >1, \gcd(b_1, b_2) = 1 $$
or
$$ y>z>x, a = a_1 a_2, a_1^{x} = n^{z-x}, a_1, a_2 \in \natN, a_1>1, \gcd(a_1, a_2)=1. $$
\end{lemma}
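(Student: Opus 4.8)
The plan is to start from a nontrivial solution $(x,y,z) \ne (1,1,1)$ of (1.1) and extract arithmetic constraints by comparing the three terms $(an)^x$, $(bn)^y$, and $((a+b)n)^z$ modulo suitable quantities. First I would observe that $\gcd(a,b)=1$ forces $\gcd(a,a+b)=\gcd(b,a+b)=1$, so the three bases $a$, $b$, $a+b$ are pairwise coprime; this is the structural fact that drives everything. The central idea is that exactly one of the three exponent-comparisons $x>z$, $y>z$, $x<z$, $y<z$ patterns can hold, and I would first rule out the ``balanced'' and ``both-larger/both-smaller'' cases so that the dichotomy $x>z>y$ versus $y>z>x$ emerges.

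The key steps, in order, are as follows. First I would show $z$ must lie strictly between $x$ and $y$, i.e. either $x>z>y$ or $y>z>x$, by a size/archimedean argument together with a $p$-adic argument: comparing the $n$-adic (and prime-by-prime) valuations of the two smaller terms against the largest term shows the three exponents cannot all be equal unless $(x,y,z)=(1,1,1)$, and they cannot be monotone in a way that makes $z$ extremal, since then one side of (1.1) would dominate or fail a valuation congruence. Concretely, rewriting (1.1) as $a^x n^x + b^y n^y = (a+b)^z n^z$ and examining the power of $n$ dividing each term is decisive: the smallest of $x,y,z$ controls the common factor $n^{\min}$, and after dividing out, a residual equation modulo a prime dividing $n$ or dividing one of the coprime bases forces the middle position onto $z$. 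Second, in the case $x>z>y$, I would divide (1.1) through by $n^y$ to obtain $a^x n^{x-y} + b^y = (a+b)^z n^{z-y}$, and then read off that $b^y \equiv 0 \bmod n^{z-y}$ is forced because the other two terms are divisible by $n^{z-y}$ (here using $x-y > z-y$). This divisibility $n^{z-y} \mid b^y$, combined with $\gcd(a,b)=1$ and the pairwise coprimality, yields a factorization $b=b_1 b_2$ with $b_1^{y}=n^{z-y}$, $b_1>1$, and $\gcd(b_1,b_2)=1$, exactly by collecting the primes of $n$ that appear in $b$. The case $y>z>x$ is symmetric, interchanging the roles of $a$ and $b$.

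The main obstacle I anticipate is the second step: passing from the raw divisibility $n^{z-y}\mid b^y$ to the clean multiplicative splitting $b_1^{\,y}=n^{z-y}$ with $\gcd(b_1,b_2)=1$. The subtlety is that a prime $p\mid n$ could divide $b$ to a power that does not align exponent-wise with $n^{z-y}$, so one must argue prime-by-prime that every prime of $n$ dividing $b$ does so to \emph{exactly} the right power, using that $a^x n^{x-y}$ and $(a+b)^z n^{z-y}$ contribute no factor of such a prime beyond what $n^{z-y}$ already supplies (again leaning on $\gcd(a,b)=\gcd(b,a+b)=1$). Establishing that the exponent of each such prime in $b^y$ equals its exponent in $n^{z-y}$ exactly --- not merely at least --- is where the hypothesis $\min\{a,b\}\ge 4$ and the coprimality conditions must be used most carefully; once that is done, setting $b_1$ to be the product of these prime powers and $b_2=b/b_1$ completes the proof. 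Since the lemma is attributed to Sun and Tang \cite{SuTa}, I would expect the actual argument to follow this valuation-theoretic skeleton, with the delicate bookkeeping of prime exponents being the real content.
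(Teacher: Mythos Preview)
The paper does not supply its own proof of this lemma; it is quoted directly from Sun and Tang \cite{SuTa} and used as a black box, so there is no in-paper argument against which to compare your sketch.

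On the merits of your outline: the overall architecture is right, but you have misdiagnosed where the difficulty lies. What you call the ``main obstacle'' --- passing from $n^{z-y}\mid b^y$ to the exact splitting $b_1^{\,y}=n^{z-y}$ --- is in fact immediate and does not use $\min\{a,b\}\ge 4$ at all. From
\[
b^y \;=\; n^{z-y}\bigl((a+b)^z - a^x n^{\,x-z}\bigr)
\]
one sees that every prime $p\mid n$ divides $b$, hence $p\nmid a$ and $p\nmid a+b$; then $v_p\bigl((a+b)^z - a^x n^{\,x-z}\bigr)=\min\bigl(0,(x-z)v_p(n)\bigr)=0$, so $y\,v_p(b)=(z-y)\,v_p(n)$ holds \emph{with equality} for every $p\mid n$. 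Taking $b_1=\prod_{p\mid n}p^{v_p(b)}$ and $b_2=b/b_1$ gives the stated factorization at once.

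The genuinely nontrivial part is your first step, about which you are vague. Ruling out $z\ge\max(x,y)$ is a one-line size comparison: it forces $(a+b)^z\le a^z+b^z$, hence $z=1$ and $(x,y,z)=(1,1,1)$. But ruling out $z\le\min(x,y)$ with $z\ge 2$ is not a pure size matter --- the inequality $(a+b)^z>a^z+b^z$ now points the wrong way --- and one must combine the valuation observation that every prime of $n$ then divides $a+b$ with a sharper archimedean bound. This is where the hypothesis $\min\{a,b\}\ge 4$ is actually needed, and your phrase ``a residual equation modulo a prime \dots forces the middle position onto $z$'' does not indicate how that case closes. If you intend a self-contained proof, that is the gap to fill; otherwise, citing \cite{SuTa} as the paper does is the appropriate move.
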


\section{Proof of Theorem and Corollary}  

\begin{proof}[Proof of Theorem 1.1]
Let $(x,y,z)$ be a solution of (1.2) with $x >z>y$.  By Lemma 2.6, we have 
$$ B = B_1 B_2, B_1, B_2 \in \natN, B_1 >1, \gcd(B_1, B_2)=1 \eqno{(3.1)} $$
and 
$$ B^{2y} = n^{z-y}.  \eqno{(3.2)} $$
Substituting (3.1) and (3.2) into (1.2), we get
$$ A^{2x} n^{x-z} + B_2^{2y} = (A^2 + B^2)^z.  \eqno{(3.3)} $$

Let $D = R(A^{2x} n^{x-z} )$.  Since $A^{2x} n^{x-z} / R(A^{2x} n^{x-z}) $ is the square of a positive integer, we see from (3.3) that the equation 
$$X^2 + D Y^2 = (A^2 + B^2)^Z, X,Y,Z \in \intZ, \gcd(X,Y) =1, Z>0  \eqno{(3.4)} $$
has the solution 
$$ (X,Y,Z) = \left( B_2^y, \sqrt{\frac{A^{2x} n^{x-z}}{D}}, z \right)  \eqno{(3.5)} $$
with (2.7) holding.  

Since $\gcd(A,B) = 1$, by (3.1) and (3.2), we have $\gcd(A, B_1) = \gcd(A,n) = 1$.  Hence, we get 
$$ D = R(A^{2x} n^{x-z}) = R(A^{2x}) R(n^{x-z}).  \eqno{(3.6)} $$
Further, since $R(A^{2x})>1$ and $R(n^{x-z})>1$, by (3.6), we have have $D>2$.  Since $\gcd(A,B)=1$ and $AB \equiv 0 \bmod 2$, we have $\gcd(2D, A^2 + B^2) = 1$.  Therefore, applying Lemma 2.5 to (3.5), we get
$$ z \le 6 h(-4D).  \eqno{(3.7)} $$

Since $r(n)= r(B_1)$ and $n^{x-z} \in S(B_1)$ by (3.2), we have $R(A^{2x}) \le A^2$ and $R(n^{x-z}) \le B_1^2$.  Hence, by (3.6), we get
$$ D \le A^2 B_1^2. \eqno{(3.8)} $$
Further, by Lemma 2.4, we obtain from (3.8) that 
$$ h(-4D) < \frac{4}{\pi} A B_1 \log(2 e A B_1).  \eqno{(3.9)} $$

On the other hand, since $x>z$, by (3.3), we have 
$$ (A^2 n)^{x-z} < \left(1 + \frac{B^2}{A^2} \right)^z.  \eqno{(3.10)} $$
Since $\log(1+\theta) < \theta$ for any $\theta >0$, by (3.10), we get
$$ (x-z) \log(A^2 n) < \frac{x B^2 }{A^2}.  \eqno{(3.11)} $$
Further, since $A > 8 B^3$, by (3.11), we obtain
$$ z > (x-z) \frac{A^2}{B^2} \log(A^2 n) \ge \frac{A^2}{B^2} \log(A^2 n) > 8 A B \log(A^2 n).  \eqno{(3.12)} $$
The combination of (3.7), (3.9) and (3.12) yields 
$$ \frac{24}{\pi} A B_1 \log( 2 e A B_1) > 8 A B \log(A^2 n).  \eqno{(3.13)} $$
But, since $24/\pi < 8$, $A B_1 \le A B$ and $2 e A B_1 < 8 A B^3 < A^2 < A^2 n$, (3.13) is false.  Thus, if $A > 8 B^3$, then (1.2) has no solutions $(x,y,z)$ with $x>z>y$.  
\end{proof}

\begin{proof}[Proof of Corollary 1.1]
Under the assumptions, by Theorem 1.1 and the result of \cite{YuHa}, (1.2) has no solutions with $x>z>y$ nor with $y>z>x$.  Therefore, by Lemma 2.6, (1.2) has no solutions $(x,y,z)$ with $(x,y,z) \ne (1,1,1)$.  
\end{proof}

\bigskip

\noindent
Acknowledgment: The author would like to thank Prof. R. Scott and Prof. R. Styer for reading the original manuscript carefully and giving valuable advice.  Especially thanks to Prof. R. Styer for verifying some details of this paper and providing other technical assistance.

\end{document}